\title{Universal and Near-Universal Cycles of Set Partitions}
\author{Zach Higgins\\
Department of Mathematics\\ University of Florida\and 
Elizabeth Kelley\\
Department of Mathematics\\ Harvey Mudd College\and
Bertilla Sieben\\ Department of Mathematics\\Princeton University\and Anant Godbole\\
Department of Mathematics and Statistics\\ East Tennessee State University
}
\begin{document}
	\maketitle
\def\l{\mathbb L}
\def\la{\lambda}
\def\lr{\left(}
\def\rr{\right)}
\def\lc{\left\{}
\def\rc{\right\}}
\def\vp{\varphi}
\def\ca{\mathcal A}
\def\cp{\mathcal P}
\def\b{\beta}
\def\a{\alpha}
\def\e{\mathbb E}
\def\p{\mathbb P}
\def\v{\mathbb V}
\def\lg{{\rm lg}}
\def\ra{\rightarrow}
\def\nm{{n\choose m}}
\def\mr{{m\choose r}}
\def\nmr{{{n-m}\choose{m-r}}}
\newcommand{\Z}{\mathbb Z}
\newcommand{\marginal}[1]{\marginpar{\raggedright\scriptsize #1}}
\newcommand{\BS}{\marginal{BS}}
\newcommand{\AG}{\marginal{AG}}
\newcommand{\EK}{\marginal{EK}}
\newcommand{\ZH}{\marginal{ZH}}
\newtheorem{thm}{Theorem}
\newtheorem{lm}[thm]{Lemma}
\newtheorem{cor}[thm]{Corollary}
\newtheorem{rem}[thm]{Remark}
\newtheorem{exam}[thm]{Example}
\newtheorem{prop}[thm]{Proposition}
\newtheorem{defn}[thm]{Definition}
\newtheorem{cm}[thm]{Claim}

	\begin{abstract}
			We study universal cycles of the set $\cp(n,k)$ of $k$-partitions of the set $[n]:=\{1,2,\ldots,n\}$ and prove that the transition digraph associated with $\cp(n,k)$ is Eulerian.  But this does not imply that universal cycles (or ucycles) exist, since vertices represent equivalence classes of partitions!  We use this result to prove, however, that ucycles of $\cp(n,k)$ exist for all $n \geq 3$ when $k=2$.  We reprove that they exist for odd $n$ when $k = n-1$ and that they do not exist for even $n$ when $k = n-1$.  An infinite family of $(n,k)$ for which ucycles do not exist is shown to be those pairs for which $S(n-2, k-2)$ is odd ($3 \leq k < n-1$).  We also show that there exist universal cycles of partitions of $[n]$ into $k$ subsets of distinct sizes when $k$ is sufficiently smaller than $n$, and therefore that there exist universal packings of the partitions in $\cp(n,k)$.  An analogous result for coverings completes the investigation.   
		\end{abstract}
		
	\section{Introduction}  Universal cycles are often loosely defined. In a recent seminar talk at Viriginia Commonwealth University, Glenn Hulbert offered the following description: ``Broadly, universal cycles (ucycles) are special listings of combinatorial objects
in which codes for the objects are written in an overlapping, cyclic manner."  By ``special," Hurlbert means ``without repetitions".  As an example, the cyclic string 112233 encodes each of the six multisets of size 2 from the set $\{1,2,3\}$. Another well-quoted example, from \cite{h}, is the string
\[ 1356725\ 6823472\ 3578147\ 8245614\ 5712361\ 2467836\ 7134582\ 4681258, \]
 where each block is obtained from the previous one by addition of 5 modulo 8. This string is an encoding of the fifty six 3-subsets of the set $[8]:=\{1,2,3,4,5,6,7,8\}$.  A seminal paper in the area is that of Chung, Diaconis and Graham \cite{cdg} who studied ucycles of 
\begin{itemize}\item subsets of size $k$ of an $n$-element set (as in the above example); \item set partitions (the focus of this paper); and \item permutations (with a necessarily augmented ground set and the use of order isomorphism representations, e.g., the string 124324 encodes each of the six permutations of $[3]=\{1,2,3\}$ in an order isomorphic fashion, which is clearly not possible to do using the ground set $[3]$).\end{itemize} 
  In \cite{cdg} it is shown that for $n\ge 4$, there {\it does exist} a ucycle of all partitions $\cp(n)$ of the set $[n]$ into an arbitrary number of parts; for example, we have the ucycle $abcbccccddcdeec$ of $\cp(4)$, where, for example, the substring $dcde$ encodes the partition $13\vert2\vert4$. Note that the alphabet used was in this case of size 5, though an alphabet of (minimum) size 5 is shown to suffice to encode $\cp(5)$ as 
$$DDDDDCHHHCCDDCCCHCHCSHHSDSSDSSHSDDCH$$$$SSCHSHDHSCHSJCDC.$$
The above example reflects tongue-in-cheek humor, since there are 52 partitions of $[5]$ and the above ucycle has 13 cards of each suit -- except that one spade has been replaced by a joker!  The authors also ask how many partitions of $\cp(n)$ using an alphabet of size $N\ge n$ exist. This question will also be of deep relevance to us, as alluded to in the later section on Open Problems.

As noted in \cite{ds}, however, not much seems to be known about ucycles of the partitions $\cp(n,k) (\vert\cp(n,k)\vert=S(n,k))$ of $[n]$ into $k$ parts.  In \cite{cs}, it was shown that for $k=n-1$, ucycles exist if and only if $n$ is odd.  At the other end of the $k$-spectrum, the authors of \cite{egm} showed that for odd $n$, one could find a ucycle of partitions of $[n]$ into two parts, and that an ``asymptotically good universal packing" could be found for $k=3$, i.e., that there was a string of length $T(n,3)<S(n,3)$ with \[\frac{T(n,3)}{S(n,3)}\to 1, \quad n\to\infty,\]
where each of the $T(n,3)$ consecutive strings of length $n$ represented a different partition of $[n]$ into 3 parts of distinct sizes.  The authors of \cite{egm} also proved that ucycles of partitions of $[n]$ into 2 parts exist if $n=4$ or $6$, as evidenced by the respective explicit examples
$$aabbaba$$ and $$abaaaabbaababbbabbaaabbbbbababa.$$ It is this work that we build on.  In Section 2, we generalize the above result on asymptotically good universal packings (upackings) to the case of all fixed $k$ as $n\to\infty$, as well as to ucoverings, which are also shown to be ``asymptotically good."    Finally, Section 3 is devoted to showing that the transition digraph associated with $\cp(n,k)$ is Eulerian. As noted in the work of \cite{cdg}, this does not necessarily imply that universal cycles exist, since the digraph vertices represent equivalence classes of partitions.  We use our result to prove, however, that ucycles of $\cp(n,k)$ exist for all $n \geq 3$ when $k=2$ and for odd $n$ when $k = n-1$, the latter recovering the result in \cite{cs}.  We also (re)prove that ucycles do not exist for $n$ even when $k = n-1$.  Finally, we show that for even $n$, ucycles do not exist when $S(n-2, k-2)$ is odd ($3 \leq k < n-1$).  The last result shows, e.g.,  that ucycles of $\cp(12,6)$, or $\cp(6,3)$ do not exist.  There are infinitely many such pairs of values of $(n,k)$.  Moreover, the technique we exhibit  in Section 3 has the potential to tease out many more results along these lines.

\section{Universal Packings and Coverings of Partitions of $[n]$ into $k$ parts} One of the main results in \cite{lbg} was that one could create a
ucycle of all surjections from $[n]$ to $[k]$ iff $n>k$. Since
there are $k!$ surjections that yield the same set partition, we
need to be more careful, and proceed by showing in Theorem 1 that
 for sufficiently large $n$, it is possible to ucycle
partitions of $[n]$ into $k$ parts of {\em distinct} sizes. We
represent such partitions as surjections from $f:[n]\rightarrow[k]$;
$n>k$, with $\vert f^{-1}(\{1\})\vert<\vert f^{-1}(\{2\})\vert<\ldots<\vert f^{-1}(\{k\})\vert$.
The fact that \textit{asymptotically good upackings} exist is proved
in Theorem 2, where we provide an alternative proof of the
intuitively obvious fact that for any fixed $k$, the number of $k$-partitions
of $[n]$ with non-distinct part sizes form a vanishing fraction of
all partitions into $k$ parts as $n\to\infty$. \begin{thm} For
each fixed $k,n;n\ge\frac{(k+4)(k-1)}{2}+1$, there exists a ucycle
of all onto functions $f:[n]\rightarrow[k]$ such that the preimage cardinality 
function $\vert f^{-1}\vert:\{\{1\},\{2\},\ldots\{k\}\}\rightarrow[n]$
is strictly increasing. \end{thm}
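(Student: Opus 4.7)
The plan is to construct a de Bruijn-style transition digraph $D$ whose edges correspond bijectively to canonical surjections $f:[n]\to[k]$ (those with $\vert f^{-1}(1)\vert<\cdots<\vert f^{-1}(k)\vert$), show that $D$ has an Eulerian circuit, and then read off the appended letters around the circuit to produce the desired ucycle. I take as vertices all length-$(n-1)$ words over $[k]$ that appear as a prefix or suffix of some canonical surjection, and add a directed edge from $v_1\cdots v_{n-1}$ to $v_2\cdots v_n$ for every canonical surjection $v_1v_2\cdots v_n$.

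The Eulerian balance condition is almost immediate. For a vertex $v\in[k]^{n-1}$ with letter-count vector $(b_1,\ldots,b_k)$, both $v\cdot c$ and $c\cdot v$ have the same count vector $(b_1,\ldots,b_c+1,\ldots,b_k)$; hence $v\cdot c$ is a canonical surjection if and only if $c\cdot v$ is. Thus the set of letters $c\in[k]$ contributing an out-edge at $v$ coincides with the set contributing an in-edge, so $\deg^+(v)=\deg^-(v)$ at every vertex of $D$.

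The main work, and the principal obstacle, is establishing weak connectivity of $D$; this is where the hypothesis $n\ge\frac{(k+4)(k-1)}{2}+1=\binom{k+1}{2}+(k-1)$ enters. Organizing the edges of $D$ by \emph{type} $(a_1<\cdots<a_k)$, one observes that within a single type the edges decompose into rotation orbits, because a type-preserving slide $v_1\cdot s\mapsto s\cdot v_1$ forces the dropped and appended letters to coincide. Consequently the components of $D$ can only be stitched together by \emph{bridging} canonical surjections whose one-step slides move from one type $(a_1,\ldots,a_k)$ to a neighboring type $(a_1,\ldots,a_j-1,\ldots,a_l+1,\ldots,a_k)$ with $j\neq l$. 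The slack of $k-1$ above the minimum sum $\binom{k+1}{2}$ is precisely what guarantees that every admissible single-swap move between types can actually be realized by such a bridging surjection, and moreover that within each type there are enough distinct bridging edges to merge all rotation orbits into a single weak component of $D$. The bulk of the argument is a (somewhat intricate) case analysis on the valid types $(a_1,\ldots,a_k)$ confirming both connectivity of the induced ``type graph'' under admissible swaps and the sufficiency of bridging within each type.

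Once connectivity and balance are both in hand, Euler's theorem supplies an Eulerian circuit of $D$, which translates directly into a ucycle over all canonical surjections of $[n]$ onto $[k]$, giving the theorem.
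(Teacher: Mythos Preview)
Your overall framework matches the paper's exactly: build the de~Bruijn-type digraph $D$ on length-$(n-1)$ words, observe balance from the symmetry between prepending and appending a letter, and reduce the existence of a ucycle to weak connectivity of $D$. The balance argument you give is essentially identical to the paper's.

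The genuine gap is in the connectivity step. You correctly note that same-type slides are rotations and that crossing between types requires ``bridging'' words, but you then defer the entire substance of the argument to an unexecuted ``somewhat intricate case analysis.'' In particular, you assert without justification that the slack of $k-1$ ``guarantees that every admissible single-swap move between types can actually be realized'' and that ``there are enough distinct bridging edges to merge all rotation orbits into a single weak component.'' Neither claim is argued: you do not say what makes a swap admissible, why $k-1$ is the correct slack, or---most importantly---why every rotation orbit within a given type is actually reached. Merging the many rotation orbits inside a single type via excursions to neighboring types and back is precisely the nontrivial part, and nothing in the proposal addresses it.

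The paper closes this gap constructively rather than structurally: it fixes the target edge $1\,22\,333\cdots(k-1)\cdots(k-1)\,k\cdots k$ and exhibits an explicit algorithm that reaches it from an arbitrary edge by alternately rotating and \emph{switching} a single symbol $j\in\{2,\ldots,k-1\}$ to a $k$ (or back), possibly through intermediate switches. The crucial observation enabling this is that when $n\ge\frac{(k+4)(k-1)}{2}+1$, every partition-size vector necessarily has a gap of at least~$2$ between two consecutive parts, and a gap of~$2$ is exactly what permits a legal one-letter switch while remaining inside $D$. Your proposal needs this (or an equivalent) lemma made explicit, together with an argument that the available moves connect every vertex to a common target; as written, the connectivity step is outlined but not proved.
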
 \begin{proof} Following the standard
process, we create a digraph $D$ in which the vertices are sequences of
length $n-1$, of numbers in $\{1,\ldots,k\}$, for which the addition
of at least one number in $\{1\ldots,k\}$ at the beginning or the
end creates a sequence of length $n$ which, using the special canonical
format we have adopted, represents a partition of $[n]$ into $k$
parts of distinct sizes. For example, with $n=10,k=4$, 122333444
is a legal vertex, as is 123334444. On the other hand, 112233344 is
not in the underlying graph. A vertex points towards another if its
last $n-2$ terms are the same as the first $n-2$ terms of the second
vertex. The edges of the digraph, labeled by vertex label concatenation,
are thus sequences representing partitions of $[n]$ into $k$ distinct
parts.

The problem of finding a ucycle is reduced to the problem of finding
an Eulerian circuit in this digraph. We know that Eulerian circuits
exist if the graph is both strongly connected and balanced, i.e.,
for each vertex $v$, the in- and out-degrees of $v$ are equal. 
It is easy to show that if a digraph is balanced and weakly connected,
then it is also strongly connected, so all we need to show is that
our digraph is balanced and weakly connected. This approach
is used, e.g., in \cite{bg}.

Showing that $D$ is balanced is simple - if a number from 1 through
$k$ can be added to the beginning of the sequence at a vertex, then
it can also always be added to the end of the sequence since it is
only the numbers of 1's, 2's, $\ldots$ and $k$'s that actually matter
in determining if an edge represents a partition into distinct parts.
Therefore, the number of edges pointing away from a vertex will be
the same as the number of edges pointing towards it. Note that the
in- and out-degrees of any vertex (the common value is sometimes called
the  vertex degree), though equal, are quite different at different
vertices. For example for $k=3$, the vertex 123333333 has degree
1; the vertex 122333333 has degree 2; and the vertex 122233333 has
degree 3. In general, one may write down a formula for $deg(v)$ depending
on the differences between the number of $i+1$s and the number of
$i$s in $v$.

To show that the digraph is weakly connected, we need only show that
it is possible to reach a designated target vertex from any other
starting vertex. We will let this target vertex be the one consisting
of two 2's, three 3's,$\ldots$,and $k-1$ $k-1$'s -- leaving all
of the remaining numbers as $k$'s, in that order. For example, for
$n=27$ and $k=6$, the target vertex of length 26 will be $22333444455555666666666666$.
In fact, we will show something stronger, namely that one can
traverse from any edge to the edge $122333\ldots(k-1)\ldots(k-1)k\ldots k$
from whence we may reach the target vertex in a single step. Notice
that such edges represent legal partitions in $\cp(n,k)$ only if
$n\ge n_{0}:=k(k+1)/2$.

Key to our algorithm on how to reach one edge from another is the
process of ``switching" numbers. For example, we can
write out all of the steps to go from the edge $33132323$ to the
edge $33122323$ as follows $33132323\ra31323233\ra13232333\ra32323331\ra23233312\ra32333122\ra23331223\ra33312232\ra33122323,$
or, we can skip all the steps of ``rotating around"
and just say that we ``switched" the 3 to the right
of the 1 into a 2. Note that ``rotating" is always
legal but switching in the above form might not always be, even in
several steps. We need to have the ``room to maneuver around"
while maintaining edge-integrity.

We will show that the only requirement to reach the designated target
edge from any other edge is to have the ability to ``switch"
any $j\in\{2,3,\ldots,k-1\}$ into a $k$ and vice versa (possibly
through several steps), and that this is equivalent to having $n\ge n_{1}:=\frac{(k+4)(k-1)}{2}+1=n_{0}+(k-1)$,
where the ``extra" $(k-1)$ digits give us the needed
room to maneuver around. 

We will let the partition size vector (PSV) of an edge be a $k$-tuple
that expresses the number of 1's, 2's, etc. in order. For example,
with $n=18;k=5$, the only possible PSV's are $(1,2,3,4,8)$; $(1,2,3,5,7)$;
and $(1,2,4,5,6)$. In this case, there is no way, e.g., to change
a 2 with any other number. If, for example, a 2 is switched
with a 5, this forces the numbers of 1's and 2's to be equal, and
other issues might arise if a 2 is switched with a 3 or a 4. However,
if we have $n=19$ and $k=5$, the PSV's are $(1,2,3,4,9)$, $(1,2,3,5,8)$,
$(1,2,3,6,7)$, $(1,2,4,5,7)$, and $(1,3,4,5,6)$, which will be
seen to imply that no matter what our starting position, we have enough
``spaces" so that we can switch a 2, 3, or a 4 into
a 5 (possibly in multiple steps) and back again eventually. Notice
that for $k=5$, $n_{1}=19$. For example, say we begin with the edge
$(1,3,4,5,6)$ and we want to switch a 3 in a particular spot into
a 2, while maintaining the PSV composition. We will do this by first
switching the 3 to a 5, then switching the 5 to a 2, in several steps:

a. Switch any 2 into a 5 to create space between the number
of 2's and the number of 3's, we will then have PSV=($1,2,4,5,7)$;

b. Switch the 3 that we want to eventually switch into a 2 
into a 5, we then have PSV=$(1,2,3,5,8)$;

c. Switch a \textit{different } 5 back into a 3 to create space
between the number of 2's and the number of 3's, PSV=$(1,2,4,5,7)$;
and finally 

d. Switch the 5 we want into a 2, yielding the switch we originally
wanted. PSV=$(1,3,4,5,6)$.

In general, our algorithm to reach the target sequence will then be
to first, if we have more than one 1, change all extra 1's into $k$'s.
We will then underline the single remaining 1 as something we won't
touch again. Next, we will switch the number to the right of the 1
into a $k$, possibly in multiple steps, and then the $k$ back into
a 2, again possibly through multiple steps. We will now underline
the 1 and the 2 together, as something we won't touch again. Next
we will consider the next number to the right of this 2 and switch
it to a $k$, and then switch back from the $k$ into a 2 again, then
underline the sequence $122$, and switch all remaining 2's in the
sequence into $k$'s. Next, we consider the number to right of the
second 2, switch it into a $k$, and then switch from the $k$ back
into a 3, then block off $1223$, etc., until we reach the target sequence.
The following example illustrates the general technique.

Let $n=19,k=5$. Suppose we begin with the sequence

\noindent $1432543552543455435$ with PSV=$(1,2,4,5,7)$. There are
no extra 1's to change into 5's. Then we can first change the 4 to
the right of the 1 into a 5, but in order to do this, we must create
space between the numbers of 3's and 4's by changing one 3 into a
5; we thus arrive at $1452543552543455435$ PSV=$(1,2,3,5,8)$. The
4 to the right of the 1 can now be changed into a 5 to get $1552543552543455435$
PSV=$(1,2,3,4,9)$. We now need to change the 5 to the right of the
1 back into a 2 by creating space between the number of 4's and 3's
and then between the number of 3's and 2's. This leads to 
\[
1542543552543455435;(1,2,3,5,8);
\]
 
\[
1542343552543455435;(1,2,4,5,7);
\]
 and finally 
\[
\underline{12}42343552543455435;(1,3,4,5,6),
\]
 where the vectors above all represent PSV's. We then change the next
4 into a 5 and then back into a 2 as follows: 
\[
\underline{12}45343552543455435;(1,2,4,5,7),
\]
 
\[
\underline{12}45543552543455435;(1,2,3,5,8),
\]
 
\[
\underline{12}55543552543455435;(1,2,3,4,9),
\]
 
\[
\underline{12}54543552543455435;(1,2,3,5,8),
\]
 
\[
\underline{12}54343552543455435;(1,2,4,5,7),
\]
 and 
\[
\underline{122}4343552543455435;(1,3,4,5,6).
\]
 The rest of the algorithm proceeds as follows: 
\[
\underline{122}4343555543455435;(1,2,4,5,7),
\]
 
\[
\underline{122}4543555543455435;(1,2,3,5,8),
\]
 
\[
\underline{122}5543555543455435;(1,2,3,4,9),
\]
 
\[
\underline{122}5443555543455435;(1,2,3,5,8),
\]
 
\[
\underline{1223}443555543455435;(1,2,4,5,7),
\]
 
\[
\underline{1223}445555543455435;(1,2,3,5,8),
\]
 
\[
\underline{1223}545555543455435;(1,2,3,4,9),
\]
 
\[
\underline{1223}544555543455435;(1,2,3,5,8),
\]
 
\[
\underline{12233}44555543455435;(1,2,4,5,7),
\]
 
\[
\underline{12233}44555545455435;(1,2,3,5,8),
\]
 
\[
\underline{12233}54555545455435;(1,2,3,4,9),
\]
 
\[
\underline{12233}54455545455435;(1,2,3,5,8),
\]
 
\[
\underline{12233344}55545455435;(1,2,4,5,7),
\]
 
\[
\underline{12233344}55545455455;(1,2,3,5,8),
\]
 
\[
\underline{122333444}5545455455;(1,2,3,6,7),
\]
 
\[
\underline{122333444}5555455455;(1,2,3,5,8),
\]
 
\[
\underline{1223334444555}455455;(1,2,3,6,7),
\]
 
\[
\underline{1223334444555555555};(1,2,3,4,9).
\]
 This process works in general since, given an edge of weight $n_{1}:=\frac{(k+4)(k-1)}{2}+1$,
the sums of the gaps between the components of the PSV may be as low
as $k$, corresponding to the PSV $(1,3,4,\ldots,k+1)$, or as high
as $2k-1$, corresponding to the PSV $(1,2,\ldots,k-1,2k-1)$. Switching
numbers, possibly in multiple steps, is always possible whenever there
is a gap of two somewhere in the PSV sequence, which is guaranteed
by the choice of $n_{1}$. Question: Does a better algorithm allow
for a smaller threshold $n$? \end{proof} 
The next theorem shows that there exists a ucovering of all partitions of $[n]$ into $k$ parts if $n$ is large enough; for simplicity we let the threshold $n$ be the same as in Theorem 1.  This is because any partition may be represented by a surjection satisfying the conditions of Theorem 2, though there may be multiple such representations when two or more of the part sizes are equal.
\begin{thm} For each fixed $k,n; n\ge\frac{(k+4)(k-1)}{2}+1$, there exists a ucycle of all onto functions $f:[n]\rightarrow[k]$ such that the preimage cardinality function $\vert f^{-1}\vert:\{\{1\},\{2\},\ldots\{k\}\}\rightarrow[n]$ is non-decreasing.
\end{thm}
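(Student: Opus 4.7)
The plan is to replay the Eulerian-circuit strategy of Theorem 1, now applied to the larger digraph $D^{*}$ whose vertices are length-$(n-1)$ words in $\{1,\ldots,k\}$ extendable to a legal length-$n$ word, where ``legal'' now means an onto $f$ whose preimage cardinalities are only required to be \emph{non-decreasing} rather than strictly increasing. Edges, labeled by the length-$n$ extension, are precisely the surjections we wish to cycle through, so producing an Eulerian circuit in $D^{*}$ will deliver the desired ucycle.

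First I would verify balance. The argument from Theorem 1 transfers verbatim: legality of a length-$n$ word depends only on the multiset of symbol counts (subject to the canonical non-decreasing reordering), so a symbol $j \in \{1,\ldots,k\}$ can be prepended to the word at $v$ exactly when it can be appended to it, forcing the in-degree and out-degree to agree at every vertex.

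Next I would establish weak connectivity by reusing the ``switch through $k$'' routine of Theorem 1, aiming at the same target vertex $22\,333\,4444\cdots(k-1)\cdots(k-1)\,k\cdots k$. The family of non-decreasing PSVs strictly contains the strictly increasing family considered before, so every intermediate word used in Theorem 1 remains legal here; moreover, PSVs with equal consecutive components become available as further waypoints, giving even more room to maneuver. The same threshold $n \geq \frac{(k+4)(k-1)}{2}+1$ continues to guarantee a gap of size at least two somewhere in the PSV sequence, which is the resource that powers each elementary switch.

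The main obstacle will be bookkeeping the edge case in which consecutive PSV components are equal, a configuration that does not arise under the strictly increasing hypothesis of Theorem 1. A short case analysis, parallel to the one in Theorem 1 but tracking weak rather than strict inequalities, shows that any switch $j \mapsto k$ that would momentarily violate $|f^{-1}(j-1)| \leq |f^{-1}(j)|$ can be absorbed by first routing a compensating switch through the neighboring component, again exploiting the $k-1$ ``surplus'' digits guaranteed by $n \geq n_{1}$. Once weak connectivity is in hand, Euler's theorem produces the Eulerian circuit on $D^{*}$, and reading off its edge labels yields the claimed ucycle of all non-decreasing surjections $f:[n] \to [k]$.
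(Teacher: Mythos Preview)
Your proposal is correct and takes essentially the same approach as the paper, whose proof of this theorem is the one-liner ``Exactly the same as that of Theorem 1, except that the algorithm may terminate faster.'' You spell out in more detail what the paper leaves implicit---in particular the edge case of equal consecutive PSV components---but the underlying idea (reuse the Theorem~1 switching routine on the enlarged digraph, noting that the weaker non-decreasing constraint only gives more room to maneuver) is identical.
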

\begin{proof}
Exactly the same as that of Theorem 1, except that the algorithm may terminate faster.
\end{proof}
\begin{cor} For any fixed $k\ge3$, the upacking and ucovering given in Theorems 1 and 2 respectively are both asymptotically of size $S(n,k)$, the number of partitions of $[n]$ into $k$ parts.
\end{cor}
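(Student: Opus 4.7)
The plan is to show that both the upacking of Theorem 1 and the ucovering of Theorem 2 have length $(1+o(1))S(n,k)$ as $n\to\infty$ for fixed $k\ge 3$. Let $D(n,k)$ denote the length of the Theorem 1 ucycle (the number of partitions of $[n]$ into $k$ blocks of pairwise distinct sizes) and $N(n,k)$ the length of the Theorem 2 ucycle (the number of onto $f:[n]\to[k]$ with non-decreasing preimage cardinality). One has $D(n,k)\le S(n,k)\le N(n,k)$ trivially: the first inequality since distinct-size partitions form a subfamily of $\cp(n,k)$; the second since every $\pi\in\cp(n,k)$ is represented by at least one non-decreasing surjection, obtained by sorting its blocks by size and breaking ties arbitrarily. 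A partition $\pi$ with block-size multiplicities $m_1(\pi),m_2(\pi),\ldots$ is in fact represented by exactly $\prod_j m_j(\pi)!$ non-decreasing surjections (the $m_j$ blocks of common size $j$ can be assigned their $m_j$ consecutive labels in any order), and $\prod_j m_j(\pi)!\le k!$ because $k!/\prod_j m_j(\pi)!$ is a multinomial coefficient, hence $\ge 1$. Consequently
$$N(n,k)\;\le\; D(n,k)+k!\bigl(S(n,k)-D(n,k)\bigr)\;=\; S(n,k)+(k!-1)\bigl(S(n,k)-D(n,k)\bigr),$$
so the entire corollary reduces to the single estimate
$$S(n,k)-D(n,k)\;=\;o\bigl(S(n,k)\bigr)\qquad (n\to\infty,\;k\text{ fixed}). \qquad (\star)$$

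To prove $(\star)$ I would pass from partitions to surjections: $k!\,S(n,k)$ counts all surjections $[n]\to[k]$ and $k!\,D(n,k)$ counts those with pairwise distinct preimage sizes, so $k!\,[S(n,k)-D(n,k)]$ is exactly the number of surjections with $|f^{-1}(i)|=|f^{-1}(j)|$ for some $i\ne j$. Since the non-surjections from $[n]$ to $[k]$ number only $O((k-1)^n)$ while $k!\,S(n,k)\sim k^n$, it is enough to bound, among all $k^n$ functions $f:[n]\to[k]$, the count of those exhibiting some coincidence of preimage sizes. Fixing $i\ne j$ and sampling $f$ uniformly, $|f^{-1}(i)|-|f^{-1}(j)|=\sum_{t=1}^n Y_t$ where the $Y_t$ are i.i.d.\ on $\{-1,0,1\}$ with probabilities $1/k,\,(k-2)/k,\,1/k$ respectively, hence of mean $0$ and variance $2/k$. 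A standard lattice local CLT---or, concretely, conditioning on $M=|\{t:f(t)\in\{i,j\}\}|$ and using the $O(1/\sqrt{M})$ bound on the central binomial coefficient before averaging over $M\sim\mathrm{Binomial}(n,2/k)$---then gives $\p\bigl(|f^{-1}(i)|=|f^{-1}(j)|\bigr)=O(1/\sqrt{n})$. Union-bounding over the $\binom{k}{2}$ pairs yields $O(1/\sqrt{n})$, so the count of offending functions is $O(k^n/\sqrt{n})=o(k^n)$, and dividing by $k!$ establishes $(\star)$.

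The sole nontrivial step is the lattice local CLT estimate $\p(|f^{-1}(i)|=|f^{-1}(j)|)=O(1/\sqrt n)$; everything else is bookkeeping. An alternative, more combinatorial route is to use $S(n,k)\sim k^n/k!$ together with the observation that the multinomial $\binom{n}{a_1,\dots,a_k}$ is sharply concentrated on compositions with $a_i=n/k+O(\sqrt n)$, within which the fraction with at least one exactly-equal pair is $O(1/\sqrt n)$ by the same local CLT input; either route requires one Gaussian-type estimate and no more.
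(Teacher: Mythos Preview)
Your argument is correct. Conceptually it follows the same outline as the paper's proof: both pass to uniformly random functions $f:[n]\to[k]$, apply a union bound over the $\binom{k}{2}$ pairs, and estimate $\p(|f^{-1}(i)|=|f^{-1}(j)|)=O(1/\sqrt n)$. Where you differ is in execution. First, your reduction of the ucovering bound to the upacking bound via the inequality $N(n,k)\le D(n,k)+k!\bigl(S(n,k)-D(n,k)\bigr)$, using $\prod_j m_j(\pi)!\le k!$, is a clean device that the paper does not make explicit; the paper states the $U(n,k)$ bound alongside the $T(n,k)$ bound but carries out the details only for the latter. Second, for the key estimate the paper writes the coincidence probability as the explicit sum $\binom{k}{2}((k-2)/k)^n\sum_j\binom{n}{j}\binom{n-j}{j}(k-2)^{-2j}$, proves a separate lemma locating the maximum of the summand at $j=n/k$, and then bounds the central window and the tails via Stirling's formula. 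You instead invoke the lattice local CLT (or the equivalent conditioning on $M=|f^{-1}(\{i,j\})|$ plus the $O(1/\sqrt M)$ central-binomial bound), which is shorter and more transparent but treats the Gaussian estimate as a black box. Both routes yield the same $O(1/\sqrt n)$ rate; the paper's buys a self-contained computation, yours buys brevity.
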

\begin{proof} Our proof will reveal that 
$$\frac{T(n,k)}{S(n,k)}=1-O\lr\frac{1}{\sqrt{n}}
\rr; \frac{U(n,k)}{S(n,k)}=1+O\lr\frac{1}{\sqrt{n}}\rr,\enspace n\to\infty,$$ where $T(n,k)$ and $U(n,k)$ are the lengths of the ucycles in Theorems 1 and 2 respectively.  As pointed out by Professor L\'aszl\'o Sz\'ekely, however, both these results are special cases of asymptotic results found in \cite{cms}, where a threshold of $k=n^{1/5}$ is seen to hold for the property ``partitions of size $k$ with distinct parts form a ``high" fraction of all partitions of $[n]$ into $k$ parts."  Thus such values of $k$ can serve to improve the conclusion of Corollary 3.  Our proof is somewhat different, however.

Note that $$\frac{T(n,k)}{S(n,k)} = 1 - \frac{Sa(n,k)}{S(n,k)},$$ where $Sa(n,k)$ denotes the number of partitions in which two or more parts are equal. 
Reframing the question in terms of distributing $n$ distinguishable balls into $k$ distinct boxes, we would like to calculate
$\p \left(\cup_{i,j} I_{i,j} \right)$
where $I_{i,j} = \{B_i = B_j \},$ is the event that boxes $i$ and $j$ contain the same number of balls $B_i$ and $B_j$ respectively. By symmetry, we observe that
\begin{align}
\p\left(\bigcup_{i,j} I_{i,j} \right) &\le {k \choose 2} P(B_1 = B_2) \nonumber\\
&= {k \choose 2} \sum_{j= 0}^{\lfloor n/2 \rfloor} \frac{{n \choose j}{n-j \choose j}}{k^n} (k-2)^{n-2j}\nonumber\\
&={k \choose 2}\left(\frac{k-2}{k} \right)^n \sum_{j=0}^{\lfloor n/2 \rfloor} {n \choose j}{n-j \choose j} \frac{1}{(k-2)^{2j}}.
\end{align}
\begin{lm} The function $f(j) = {n \choose j}{n-j \choose j}\frac{1}{(k-2)^{2j}}$ obtains its maximum value when $j = \frac{n}{k}$.\end{lm}
\begin{proof}

Parametrize by setting $j = An$, we see  (by Stirling's formula) that
\begin{align*}
&{n \choose An}{(1-A)n \choose An}\frac{1}{(k-2)^{2An}} \\&= \frac{n!}{(An)!(An)!((1-2A)n)!}\frac{1}{(k-2)^{2An}} \\
&=\frac{K+o(1)}{n}\left(\frac{n}{e} \right)^{n} \left(\frac{e}{An} \right)^{2An} \left(\frac{e}{(1-2A)n} \right)^{(1-2A)n} \frac{1}{(k-2)^{2An}} \\
&= \frac{K}{nA^{2An}(1-2A)^{(1-2A)n}(k-2)^{2An}}(1+o(1)).
\end{align*}
Maximizing this function is equivalent to minimizing the natural log of its denominator. Accordingly, define
$$
\beta(A)= 2An\ln A + (1-2A)n \ln (1-2A) + 2An \ln (k-2).$$
Setting $\beta'(A) = 0$, we find easily that $A=\frac{1}{k}$.
The next step is to show that  $\frac{f(j+1)}{f(j)} < 1$ for $j > \frac{n}{k}$ and $\frac{f(j+1)}{f(j)} > 1$ for $j < \frac{n}{k}$.  It is routine to calculate that
$$
\frac{f(j+1)}{f(j)} = \frac{(n-2j)(n-2j-1)}{(k-2)^2(j+1)^2},
$$
and thus that for any $\epsilon>0$,
$$
\frac{f\left(\frac{n}{k} + \epsilon + 1 \right)}{f\left(\frac{n}{k} + \epsilon \right)} 
= \frac{\left(\frac{n}{k} - \frac{2\epsilon}{k-2} \right)\left(\frac{n}{k} - \frac{2\epsilon + 1}{k-2} \right)}{\left(\frac{n}{k} + \epsilon + 1 \right)^2} 
< 1,
$$
and 
$$
\frac{f\left(\frac{n}{k} - \epsilon + 1 \right)}{f\left(\frac{n}{k} - \epsilon \right)} = \frac{\left(\frac{n}{k} + \frac{2\epsilon}{k-2} \right)\left(\frac{n}{k} + \frac{2\epsilon - 1}{k-2} \right)}{\left(\frac{n}{k} - \epsilon + 1 \right)^2} 
> 1.
$$
The lemma follows.
\end{proof}
\vspace{3mm}
We now return to (1) and see that for a $\vp(n)$ to be determined,
\begin{eqnarray}
&{}&\p\left(\bigcup_{i,j} I_{i,j} \right)\nonumber\\ &\leq& {k \choose 2}\left(\frac{k-2}{k} \right)^{n} \left[\sum_{j = \frac{n}{k} - \vp(n)}^{\frac{n}{k} + \vp(n)}f\left(\frac{n}{k}\right) + \sum_{j = 0}^{\frac{n}{k}-\vp(n)} f(j) + \sum_{j = \frac{n}{k} + \vp(n)}^{\lfloor n/2 \rfloor} f(j) \right] \nonumber\\
&\leq& {k \choose 2}
\left(\frac{k-2}{k} \right)^n \cdot\nonumber\\&&\left[2\vp(n)f\left(\frac{n}{k} \right) + \left(\frac{n}{2} - 2\sqrt{n} \right)\max\left(f\left(\frac{n}{k} + \vp(n) \right),f\left(\frac{n}{k} - \vp(n) \right) \right)\right].
\end{eqnarray}
We will next use Stirling's approximation $N!\sim{\sqrt{2\pi N}}(N/e)^n$ at various points.  Note that whenever $N=\frac{n}{k}+o(n)$ we have that ${\sqrt{2\pi N}}=\Theta{\sqrt n}$.  Accordingly, we first see that for some constant $A$
\begin{eqnarray*}
f\lr\frac{n}{k}\rr&\le&\frac{A}{n}\lr\frac{n}{e}\rr^n\lr\frac{ke}{n}\rr^{2n/k}\lr\frac{ke}{(k-2)n}\rr^{(k-2)n/k}\lr\frac{1}{k-2}\rr^{2n/k}\\
&=&\frac{A}{n}\lr\frac{k}{k-2}\rr^{n},\end{eqnarray*} 
so that first component of (2) is no more than $B\frac{\vp(n)}{n}$.
Next notice that
\begin{eqnarray*}&&f\lr\frac{n}{k}+\vp(n)\rr\\&\le&\frac{C}{n}\lr\frac{n}{e}\rr^n\lr\frac{1}{k-2}\rr^{\frac{2n}{k}+2\vp(n)}\lr\frac{e}{n((1/k)+(\vp(n)/n))}\rr^{\frac{2n}{k}+2\vp(n)}\cdot\\&&\lr\frac{e}{n(1-(2/k)-(2\vp(n)/n)}\rr^{n-(2n/k)-2\vp(n)}\\
&=&\frac{C}{n}\lr\frac{k-2}{k}+\frac{(k-2)\vp(n)}{n}\rr^{-(2n/k)-2\vp(n)}\lr\frac{k-2}{k}-\frac{2\vp(n)}{n}\rr^{-n+(2n/k)+2\vp(n)}\\
&=&\frac{C}{n}\lr\frac{k}{k-2}\rr^n\lr1+\frac{k\vp(n)}{n}\rr^{-(2n/k)-2\vp(n)}\lr1-\frac{2k}{k-2}\frac{\vp(n)}{n}\rr^{-n+(2n/k)+2\vp(n)}.
\end{eqnarray*}
Thus the second part of (2), when using $n/k+\vp(n)$, is bounded above by
\begin{eqnarray*}
&&D \lr1+\frac{k\vp(n)}{n}\rr^{-(2n/k)-2\vp(n)}\lr1-\frac{2k}{k-2}\frac{\vp(n)}{n}\rr^{-n+(2n/k)+2\vp(n)}\\
&\le&E\exp\lc-2\vp(n)-\frac{2k}{n}\vp^2(n)+\frac{2k}{k-2}\vp(n)-\frac{4}{k-2}\vp(n)-\frac{4k}{k-2}\frac{\vp^2(n)}{n}\rc\\
&=&E\exp\lc-\frac{2k}{n}\vp^2(n)-\frac{4k}{k-2}\frac{\vp^2(n)}{n}\rc,
\end{eqnarray*}
which tends to zero provided that $\vp(n)={\sqrt{n\psi(n)}}$ for any $\psi(n)\to\infty$ (noting that $k\ge 3$ is fixed).

Finally it is easy to verify that the second part of (2) tends to zero if we consider $f(n/k-\vp(n))$ as well.  This completes the proof.
\end{proof}

\section{Universal Cycles of Partitions $\cp(n,k)$ of $[n]$ into $k$ parts}    

As in the previous section, we encode a $k$-partition of $[n]$ as a string of length $n$ containing $k$ symbols where $i$ and $j$ are in the same subset of the partition if and only if the $i$th character in the string is the same symbol as the $j$th character.  
Since the cases for $k=1$ and $k = n$ are trivial, we always assume that $2 \leq k < n$. 
For convenience, we use $\{1,2,\ldots, k\}$ as our alphabet.  We refer to an encoding of a partition as a \emph{representation} of that partition.  Note that each $k$-partition of $[n]$ has $k!$ different representations. 

Following methods outlined in ~\cite{cdg}, we construct a transition digraph $G_{n,k}$ for $\cp(n,k)$ as follows.  Let the set of vertices of $G_{n,k}$ be the set of all $k$ and $(k-1)$-partitions of $[n-1]$.  There is an edge between two vertices $v$ and $w$ of $G_{n,k}$ if and only if $w$ can immediately follow $v$ in a ustring of $k$-partitions of $[n]$.  That is, there is an edge from $v$ to $w$ if and only if the last $n-2$ symbols of a representation for $v$ match the first $n-2$ symbols of a representation for $w$ \emph{and} the string formed by overlaying these two representations at their shared $n-2$ length substring is a representation of a $k$-partition of $[n]$.  Observe that each vertex that is a $k$-partition of $[n-1]$ will have indegree = outdegree = $k$, and each vertex that is a $(k-1)$-partition of $[n-1]$ will have indegree = outdegree = $1$.  As an example, $G_{5,3}$ is shown in Figure ~\ref{fig:G_5,3}, with all vertices labeled with the representation having  symbols appearing in the order 123.

\begin{figure}[h]
	\centering
	\includegraphics[width=1.0\textwidth, trim = 1.5cm 0cm 1.5cm 0cm, clip = true]{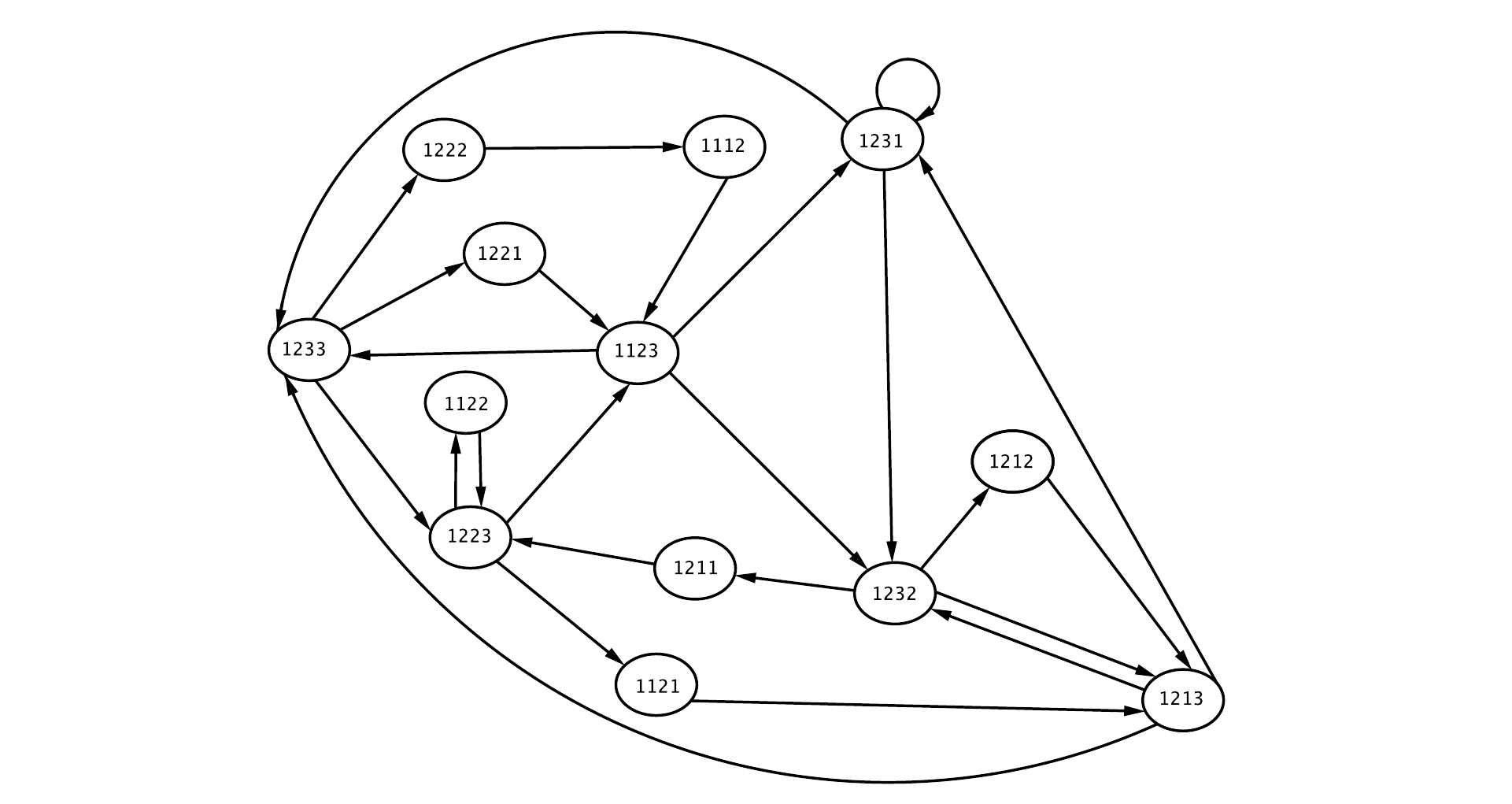}
	\caption{$G_{5,3}$}
	\label{fig:G_5,3}
\end{figure}

Now, the edges of $G_{n,k}$ are precisely the $k$-partitions of $[n]$.  Furthermore, a partition $p_1$ can follow another partition $p_2$ in a ustring for $\cp(n,k)$ if and only if the vertex at the tail of $p_1$ is also at the head of $p_2$.  Thus, there is a bijection between the Eulerian cycles of $G_{n,k}$ and the ustrings of $\cp(n,k)$.

\begin{thm}\label{t:eulerian} 
Let $n,k \in{\bf Z}^+$ with $2 \leq k < n$, and let $G_{n,k}$ be the transition digraph for $\cp(n,k)$.  Then $G_{n,k}$ has an Eulerian cycle.
\end{thm}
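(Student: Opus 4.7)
The digraph $G_{n,k}$ is balanced by construction (each $k$-partition vertex has in-degree and out-degree $k$, each $(k-1)$-partition vertex in-degree and out-degree $1$), so it suffices to prove weak connectivity; strong connectivity and the existence of an Eulerian cycle will follow. I plan to fix a target vertex $T$---the $k$-partition of $[n-1]$ canonically represented by $1 \cdot 2 \cdots (k-1) \cdot k \cdots k$ (the singletons $\{1\}, \ldots, \{k-1\}$ together with the part $\{k, k+1, \ldots, n-1\}$)---and show every vertex admits a directed walk to $T$. A directed walk of length $m \geq n-1$ from $v$ corresponds to extending some representation $a_1 \cdots a_{n-1}$ of $v$ by characters $c_1 \cdots c_m$ so that every length-$n$ contiguous window uses all $k$ symbols; the terminal vertex is then represented by the last $n-1$ appended characters. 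Since $T$ has $k!$ equivalent representations under relabeling, it suffices to steer the tail toward any one of them.

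First, I reduce to the case where $v$ is a $k$-partition. A $(k-1)$-partition vertex $u$ has a unique out-edge, and a short check shows that it lands at another $(k-1)$-partition vertex exactly when the leftmost symbol of $u$'s current representation is a singleton within it; otherwise it lands at a $k$-partition. In a chain of consecutive $(k-1)$-partition vertices, the leading symbols of the initial representation must therefore be distinct singletons. Because a $(k-1)$-partition of $[n-1]$ has at most $k-2$ singleton parts when $k < n$ (having $k-1$ singletons would force $n = k$), any such chain has length at most $k-1$ before reaching a $k$-partition vertex.

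Starting from a $k$-partition $v$ with representation $a_1 \cdots a_{n-1}$, I propose an adaptive rule for choosing each $c_j$. At each shift, if the outgoing leftmost character is unique in the current length-$n$ window, then the next appended character is forced to equal that outgoing character (else the new window drops a symbol); otherwise the next character may be chosen freely from $\{1,\ldots,k\}$. When the choice is free, I append the symbol prescribed by a pre-selected target representation of $T$; when the choice is forced, I update the target representation by a relabeling so that the forced symbol plays the appropriate role going forward, and continue. This preserves the invariant that every window is a $k$-partition, and after at most $O(n)$ shifts the current tail matches some representation of $T$.

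The main obstacle is showing that the adaptive relabeling can always be done consistently and that the procedure reaches a representation of $T$ in a bounded number of steps, rather than drifting among cyclic rotations. I plan a case analysis on the pattern of forced versus free shifts, using an amortized argument: the number of forced shifts is controlled by the multiplicities of symbols in $a_1 \cdots a_{n-1}$, and since each free shift can plant the next target symbol, the target representation can always be updated to absorb the forced symbols without loss of progress. A short final segment of shifts---if necessary, a complete rotation of the current window, which is always available from a $k$-partition vertex---can absorb any residual discrepancy and align the achieved representation exactly with a representation of $T$.
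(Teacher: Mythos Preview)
Your overall strategy---balanced plus weakly connected, with a fixed target vertex $T$ represented by $12\cdots(k-1)k\cdots k$---is exactly the paper's. But your construction of the walk to $T$ is substantially more complicated than necessary, and the part you yourself flag as the ``main obstacle'' is not actually resolved. The adaptive relabeling scheme (switch to a different representation of $T$ whenever a move is forced) is only sketched; you assert that forced moves are controlled by the multiplicities in the initial word and that an amortized argument plus a final ``complete rotation'' will finish, but none of this is carried out. In particular, once you start appending target symbols $1,2,\ldots,k-1$ these themselves become singletons in the window and will generate \emph{new} forced moves when they scroll off, so the claim that the number of forced moves is bounded by data from the initial word is not obviously true. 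The ``complete rotation'' fix is also unclear: rotating the current window need not be a legal walk (each length-$n$ window must still use all $k$ symbols), and you do not say what discrepancy it is meant to absorb.

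The paper sidesteps all of this with a clean two-phase construction. Phase one: in $k-1$ steps, reach a vertex whose representation ends in $k$ \emph{distinct} symbols. At each step you either have outdegree $k$ (then append any symbol not among the last $i$) or outdegree $1$ (then the unique out-edge appends the missing symbol, which is automatically new). Phase two: once the last $k$ symbols are all distinct, those $k$ symbols remain inside the length-$(n-1)$ window for the next $n-1-k$ steps, so \emph{every} move in this range is free. Simply repeat the current last symbol $n-1-k$ times; the resulting word has $k$ distinct symbols followed by $n-k$ copies of the last one, which is precisely a representation of $T$. No relabeling, no case analysis, no amortization. Your preliminary reduction to $k$-partition vertices is correct but unnecessary once you adopt this approach, since phase one handles $(k-1)$-partition vertices automatically.
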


\begin{proof}
$G_{n,k}$ is balanced as remarked above, so we must show that $G_{n,k}$ is weakly connected.
To do so, we show that there exists a path from any vertex of $G_{n,k}$ to the vertex $w$ with representation $(1, 2, \dotsc, k-1, k, k, \dotsc, k)$.  Accordingly, let $u$ be a vertex of $G_{n,k}$.  We describe an algorithm for obtaining a path from $u$ to $w$.  We first find a path from $u$ to a vertex $v$ which ends in $k$ distinct symbols.  We may arrive at such a vertex in $k-1$ steps by a path $u= v_1, v_2, \dotsc v_k$ where, for $i = 1, 2, \dotsc, k-1$, we choose $v_{i+1}$ to be a vertex connected to $v_i$ such that the representations of $v_{i+1}$ end in $i+1$ distinct symbols.  Note that choosing $v_{i+1}$ this way is always possible - $v_i$ will have representations ending in $i$ distinct symbols and if outdegree($v_i$) = 1 then the only possible choice for $v_{i+1}$ has representations formed by adding the missing symbol of each representation of $v_i$ to its last $n-2$ symbols 
(the case where outdegree($v_i$)=$k$ is clear).  Now, $v_k$ has representations ending in $k$ distinct symbols, so for any path of length $(n-1) - k$ starting at $v_k$, each vertex on the path will have outdegree = $k$.  Thus, there exists a path $v_k, v_{k+1}, \dotsc, v_{n-1}$, where $v_{k+j}$ has representations whose last $j+1$ symbols are all the same ($j = 0, 1, \dotsc, (n-1) - k$).  Then, by construction, $v_{n-1} = w$. 
Hence, $G_{n,k}$ is weakly connected and so it follows that $G_{n,k}$ contains an Eulerian cycle.
\end{proof}

Hence, we know that Eulerian cycles exist in $G_{n,k}$, and therefore ustrings of $\cp(n,k)$ exist as well.  However, there may be 
ustrings which cannot be turned into ucycles, which occurs when the representations of the first and last partitions do not overlap correctly, i.e., they have their symbols permuted.  This idea is illustrated in the Eulerian cycle in Figure ~\ref{fig:G_4,3cycle}:

\begin{figure}[h]
	\centering
	\includegraphics[width = .45\textwidth, trim = 0cm 0cm 0cm 0cm, clip = true]{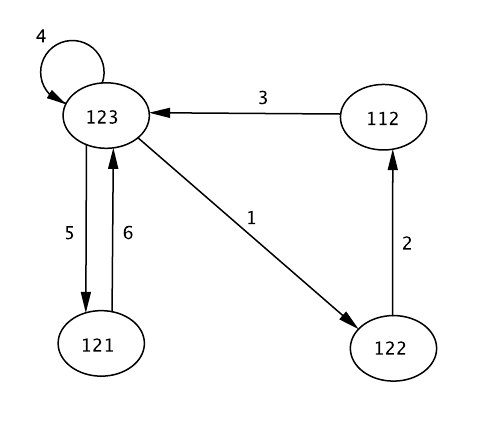}
	\caption{Eulerian cycle in $G_{4,3}$}
	\label{fig:G_4,3cycle}
\end{figure}

If we start with the representation 123 of the first vertex, then this Eulerian cycle represents the ustring 123312132, which cannot be turned into a ucycle.  This example shows another important concept - once we choose the first representation to use, all other representations used are uniquely determined by the given Eulerian cycle.  These observations motivate the following definitions.  


\begin{defn}
Suppose $v$ is a vertex in $G_{n,k}$ and $r$ is a representation for $v$.  Form a new string $r_0$ from $r$ by deleting all but the first occurence of each symbol from $r$ and appending the missing symbol to the end if $v$ is a $(k-1)$-partition.  Then $r_0$ is a permutation of $[k]$; call $r_0$ the \emph{relative order} of $r$.
\end{defn}

\begin{defn}
Consider an edge $vw$ for some vertices $v$ and $w$ of $G_{n,k}$.  Fix a representation $r_v$ of $v$ and suppose it has relative order $\pi_v$. 
Suppose the corresponding representation of $w$ is $r_w$ with relative order $\pi_w$. 
Then $\pi_w \pi_v^{-1}$ is called the \emph{associated permutation} of the edge $vw$.
\end{defn}

\begin{rem} 
We have defined the associated permutation as the $\pi \in S_k$ such that $\pi \pi_v =\pi_w$, so that this definition is independent of the choice of representation of $v$.
\end{rem}

The graph $G_{5,3}$ is shown again in Figure ~\ref{fig:G_5,3WithPerms} with edges labeled with their associated permutations (expressed in cycle notation with fixed points supressed).

\begin{figure}[h]
	\centering
	\includegraphics[width=1.0\textwidth, trim = 2cm 0cm 2cm 0cm, clip = true]{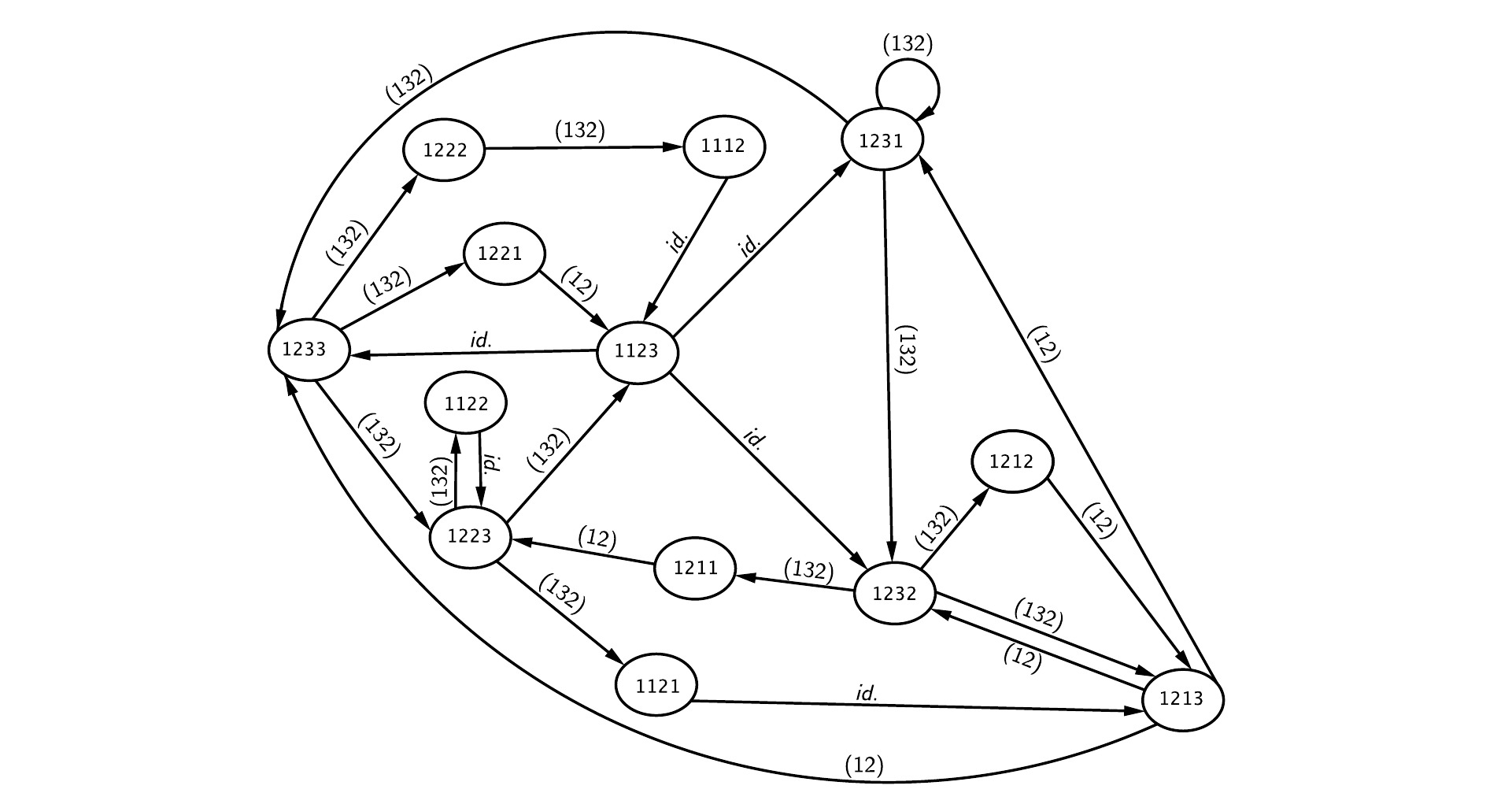}
	\caption{$G_{5,3}$ with associated permutations}
	\label{fig:G_5,3WithPerms}
\end{figure}

\begin{defn}
Let $E = e_1, e_2, \ldots, e_{S(n,k)}$ be an Eulerian cycle in $G_{n,k}$ and let $\pi_i$ be the associated permutation of $e_i$, $i = 1, 2, \ldots, S(n,k)$.  We call the product $\pi_{S(n,k)} \pi_{S(n,k) - 1} \cdots \pi_2 \pi_1$ the \emph{permutation product} of $E$.
\end{defn}

From this definition, we get the following characterization.

\begin{thm}\label{t:characterization}
An Eulerian cycle $E = e_1, e_2, \ldots, e_{S(n,k)}$ in $G_{n,k}$ can be lifted to a ucycle of $\cp(n,k)$ if and only if its permutation product is the identity.
\end{thm}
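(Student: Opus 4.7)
My plan is to track how the relative order of the current representation evolves as we traverse the Eulerian cycle $E$, and to reduce the ``lifts to a ucycle'' condition to a statement about the final representation.

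First I would record the basic bijection: for any vertex $v$ of $G_{n,k}$, the map $r \mapsto \pi_r$ sending a representation $r$ to its relative order is a bijection between the $k!$ representations of $v$ and the $k!$ permutations in $S_k$. Indeed, the canonical representation of $v$ (where the parts are labeled $1,2,\ldots,k$ in the order of first appearance, with the extra label placed last in the $(k-1)$-partition case) is uniquely determined by $v$, and every other representation is obtained by relabeling via some $\pi \in S_k$. Consequently, two representations of the same vertex coincide if and only if they have the same relative order.

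The core step is the inductive claim: once we fix any starting representation $r_{v_0}$ of the initial vertex $v_0$ with relative order $\pi_0$, the overlap condition uniquely determines successive representations $r_{v_1}, r_{v_2}, \ldots, r_{v_{S(n,k)}}$ along $E$, and the relative order of $r_{v_i}$ is $\pi_i \pi_{i-1} \cdots \pi_1 \pi_0$. The base case is immediate, and the inductive step is precisely the content of the definition of the associated permutation together with the preceding remark: crossing edge $e_i$ from a representation of relative order $\rho$ at its tail produces the unique corresponding representation of relative order $\pi_i \rho$ at its head.

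Finally, traversing $E$ starting from $r_{v_0}$ produces a ustring of length $S(n,k) + (n-1)$ whose first $n-1$ characters are $r_{v_0}$ and whose last $n-1$ characters are $r_{v_{S(n,k)}}$; this ustring closes up into a ucycle exactly when these two blocks agree. Since both are representations of the same partition $v_0$, the bijection above reduces this to the equality of their relative orders, i.e., $\pi_{S(n,k)} \cdots \pi_1 \pi_0 = \pi_0$, which is equivalent to the permutation product $\pi_{S(n,k)} \cdots \pi_1$ being the identity. (Reassuringly, this condition is independent of the choice of $\pi_0$, since conjugating each $\pi_i$ by a common element leaves identity-ness of the product unchanged.) The main obstacle is not technical depth but careful bookkeeping: once the representation $\leftrightarrow$ (vertex, relative order) bijection and the propagation rule for relative orders are in hand, the theorem follows at once.
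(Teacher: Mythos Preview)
Your proof is correct and follows essentially the same approach as the paper: fix a starting representation, track the relative order as you traverse $E$ (which amounts to left-multiplying by the associated permutations), and observe that closing up into a ucycle is equivalent to returning to the same representation, i.e., to the permutation product being the identity. You have simply spelled out in detail the bijection and the inductive propagation that the paper's two-sentence proof leaves implicit.
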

\begin{proof}
Fix a representation $r$ of the vertex at the tail of $e_1$ and suppose $r$ has relative order $\tau$.  $E$ can be lifted to a ucycle if and only if we arrive back at $r$ at the end of the cycle, and going through $E$ is equivalent to applying the permutation product to $\tau$.
\end{proof}


Now, we show that the associated permutation of an edge is completely determined by the vertex at its ``tail", and that only certain permutations can be associated permutations.

\begin{lm}\label{l:associatedperm}
Let $vw_1$ be an edge in $G_{n,k}$, suppose $vw_1$ has associated permutation $\pi$.  Then $\pi$ has the form $(1 \; j \; j\!-\!1 \cdots 2)$ for some $1 \leq j \leq k$, and if $vw_2$ is another edge from $v$, then $vw_2$ has associated permutation $\pi$ as well.
\end{lm}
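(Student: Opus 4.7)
The plan is to fix a representation $r_v = s_1 s_2 \cdots s_{n-1}$ of $v$ and compare the relative orders of $r_v$ and of the corresponding representation $r_w = s_2 s_3 \cdots s_{n-1} c$ of an arbitrary neighbor $w$. Let $a_1, a_2, \ldots, a_k$ denote the symbols in their order of first occurrence in $r_v$, so that $a_1 = s_1$; under the paper's convention the relative-order permutation satisfies $\pi_v(a_i) = i$, and in particular $\pi_v^{-1}(i) = a_i$. Both halves of the lemma will follow from a single calculation of $\pi_w$.

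The key observation is that $\pi_w$ does not depend on the appended symbol $c$. For each $i \ge 2$, the first occurrence of $a_i$ in $r_v$ already lies inside $s_2 \cdots s_{n-1}$, so the symbols $a_2, a_3, \ldots, a_k$ appear in $r_w$ in that same relative order no matter which $c$ is chosen; only the placement of $a_1$ in the first-occurrence list of $r_w$ is potentially at issue. I will split into two subcases. If $a_1$ reappears in $r_v$ at some later position $p \ge 2$, then its first occurrence in $r_w$ sits at position $p - 1$, independent of $c$. If $a_1$ does not reappear in $r_v$, then either $c = a_1$ --- in which case $r_w$ is a $k$-partition representation with $a_1$ appearing only at position $n-1$ --- or $c \neq a_1$, so $w$ is a $(k-1)$-partition and the convention of appending the missing symbol places $a_1$ at the end of the relative order. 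Both of these sub-subcases collapse to ``rank $k$'', so in every scenario there is a well-defined integer $j \in \{1, \ldots, k\}$, depending only on $r_v$, equal to the rank of $a_1$ in $\pi_w$.

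With $j$ pinned down, the first-occurrence sequence of $r_w$ is $a_2, a_3, \ldots, a_j, a_1, a_{j+1}, \ldots, a_k$, so $\pi_w$ assigns $a_1 \mapsto j$, $a_i \mapsto i - 1$ for $2 \le i \le j$, and $a_i \mapsto i$ for $i > j$. Composing with $\pi_v^{-1}$, which sends $i$ to $a_i$, then gives $\pi(1) = j$, $\pi(i) = i - 1$ for $2 \le i \le j$, and $\pi(i) = i$ for $i > j$ --- i.e., $\pi$ is precisely the cycle $(1 \; j \; j-1 \cdots 2)$ stated in the lemma. Since $j$ depends only on $r_v$, every edge leaving $v$ yields the same $\pi$, completing both claims. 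The one delicate point is checking that the two sub-subcases in which $a_1$ fails to reappear genuinely produce the same $\pi_w$; the ``append the missing symbol'' clause in the definition of relative order is precisely what makes these agree, and the rest of the argument is a routine unpacking of the definitions.
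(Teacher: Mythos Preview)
Your argument is correct and follows essentially the same route as the paper: both proofs track where the first symbol $a_1$ of $r_v$ lands in the first-occurrence order of $r_w$, observe that the remaining symbols $a_2,\ldots,a_k$ keep their relative order, and read off the cycle $(1\;j\;j{-}1\cdots 2)$. The paper simplifies by choosing the representation of $v$ with relative order $12\cdots k$ (so $a_i=i$), whereas you work with a general representation and make the convention $\pi_v(a_i)=i$ explicit; you are also more careful than the paper in spelling out the sub-subcases when $a_1$ does not reappear and when $v$ or $w$ is a $(k-1)$-partition, noting that the ``append the missing symbol'' clause is exactly what forces $j=k$ uniformly there.
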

\begin{proof}
Let $r = v_1 v_2 \cdots v_{n-1}$ be the representation of $v$ with relative order $1 2 \cdots k$.  Then there are representations $r_1 = v_2 \cdots v_{n-1} u_1$ and $r_2 = v_2 \cdots v_{n-1} u_2$ of $w_1$ and $w_2$ corresponding to $r$ under $vw_1$ and $vw_2$, respectively.  Suppose that $j-1$ distinct symbols appear in $r$ after $v_1\ (=1)$ and before a second appearance of 1 (1 may only occur once in $r$).  Since the first $n-2$ characters of both $r_1$ and $r_2$ are the same as the last $n-2$ characters of $r$, it follows that $r_1$ and $r_2$ both have relative order $2 3 \cdots j \; 1 \; j\!+\!1 \; j\!+\!2 \cdots k$.  Hence, the associated permuations of $vw_1$ and $vw_2$ are both $(1 \; j \; j\!-\!1 \cdots 2)$.
\end{proof}


\begin{thm}\label{t:k=2}
For $n \geq 3$, every Eulerian cycle of $G_{n,2}$ can be lifted to a ucycle.
\end{thm}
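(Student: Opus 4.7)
My plan is to invoke Theorem~\ref{t:characterization}, which reduces the task to showing that the permutation product of every Eulerian cycle in $G_{n,2}$ is the identity.  By Lemma~\ref{l:associatedperm} specialized to $k=2$, the associated permutation of any edge is one of the cycles $(1\;j\;j\!-\!1\cdots 2)$ with $j\in\{1,2\}$, i.e.\ either $e$ or the transposition $(1\;2)$.  Since $S_2$ is abelian and $(1\;2)^2 = e$, the permutation product of the Eulerian cycle is $e$ if and only if the total number of edges whose associated permutation is $(1\;2)$ is even.  The whole theorem therefore collapses to a parity count that does not depend on the particular Eulerian cycle.

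The structural input I will use is the second assertion of Lemma~\ref{l:associatedperm}: all edges leaving a given vertex share the same associated permutation.  In $G_{n,2}$, every vertex that is a $2$-partition of $[n-1]$ has outdegree $k=2$, so it contributes either $0$ or $2$ edges with associated permutation $(1\;2)$ --- in either case an even number.  The only exceptional vertex is the unique $1$-partition vertex $v_0$ of $[n-1]$, whose representation in relative-order form is $11\cdots 1$; this vertex has outdegree $1$, so the parity of the total $(1\;2)$-count is governed entirely by the associated permutation of $v_0$'s single outgoing edge.

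It therefore suffices to check that this distinguished edge has associated permutation $e$.  This is immediate: the relative order of $v_0$ is $1\,2$ (append the missing symbol $2$), and the only extension of $11\cdots 1$ to a representation of a $2$-partition of $[n]$ is to append $2$, giving the edge $11\cdots 12$; the head vertex then has representation $1\cdots 12$, whose relative order is again $1\,2$.  Hence the associated permutation is $(1\;2)(1\;2)^{-1}=e$, the total number of $(1\;2)$-edges is even for every Eulerian cycle, and by Theorem~\ref{t:characterization} each such cycle lifts to a ucycle.

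I do not expect a genuine obstacle, but the subtle point worth spelling out is that Lemma~\ref{l:associatedperm} legitimately applies to $v_0$ even though $v_0$ is a $(k-1)$-partition: its proof works with the unique representation of $v$ having relative order $12\cdots k$, which for $v_0$ is exactly $11\cdots 1$, so the argument goes through unchanged.  The real content of the proof is the observation that the only odd-outdegree vertex in $G_{n,2}$ is precisely the one whose unique outgoing edge is forced to carry the identity permutation.
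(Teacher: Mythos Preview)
Your proof is correct and follows essentially the same route as the paper: reduce via Theorem~\ref{t:characterization} to a parity count, use Lemma~\ref{l:associatedperm} to pair up the edges out of each outdegree-$2$ vertex, and verify directly that the lone outdegree-$1$ vertex $11\cdots 1$ contributes the identity, so the number of $(1\;2)$'s is even and the product in the abelian group $S_2$ is trivial. One small notational slip: the relative order ``$1\,2$'' in one-line notation is the identity permutation, not the transposition $(1\;2)$, so your computation of the associated permutation should read $e\cdot e^{-1}=e$ rather than $(1\;2)(1\;2)^{-1}$ --- the conclusion is unaffected.
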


\begin{proof}
Observe that the vertex with representation $11 \cdots 1$ is the only vertex of outdegree 1 and that the edge coming out of this vertex has $id.$ as its associated permutation.  All other vertices have outdegree 2, and the two edges originating from any particular vertex both have the same associated permutation by Lemma ~\ref{l:associatedperm}.
In particular, there is an even number of $\pmt{(12)}$ permutations.  Since $S_2$ is abelian, the permutation product of an Eulerian cycle will be the identity so that the result follows by Theorem ~\ref{t:characterization}.
\end{proof}

\begin{cor}
For $n \geq 3$, ucycles of $\cp(n,2)$ exist.
\end{cor}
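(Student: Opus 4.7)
The plan is to chain together the two main results of this section to obtain the corollary essentially for free. By Theorem~\ref{t:eulerian} applied with $k=2$ (and the hypothesis $n \geq 3 > k$), the transition digraph $G_{n,2}$ possesses at least one Eulerian cycle. By Theorem~\ref{t:k=2}, any such Eulerian cycle can be lifted to a ucycle of $\cp(n,2)$. Combining these two facts yields the existence of a ucycle of $\cp(n,2)$ for every $n \geq 3$.

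The reason this is so short is that all of the real work has already been done: Theorem~\ref{t:eulerian} addresses the weak connectivity obstruction for general $(n,k)$, and Theorem~\ref{t:k=2} dispatches the permutation-product obstruction specifically in the $k=2$ case by invoking the parity count of the $(1\,2)$-edges together with the commutativity of $S_2$. There is no residual obstacle to confront; in particular, there is no further case analysis on $n$ needed, since both invoked results are valid uniformly in $n \geq 3$.

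Thus the proof proposal is simply: pick any Eulerian cycle guaranteed by Theorem~\ref{t:eulerian}, and apply Theorem~\ref{t:k=2} to conclude that it lifts to a ucycle. One could even exhibit a specific Eulerian cycle to produce an explicit ucycle (recovering, for example, the $n=4$ and $n=6$ strings from \cite{egm}), but for the existence statement in the corollary this is unnecessary.
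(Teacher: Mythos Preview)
Your proof is correct and follows exactly the same approach as the paper: the paper's own proof simply states that the corollary ``follows directly from Theorem~\ref{t:eulerian} and Theorem~\ref{t:k=2},'' which is precisely the chaining you describe.
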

\begin{proof}
This follows directly from Theorem ~\ref{t:eulerian} and Theorem ~\ref{t:k=2}.
\end{proof}

We can also use the permutation product to determine cases when ucycles do not exist.  The easiest way for this to occur is if the multiset consisting of all associated permutations in $G_{n,k}$ contains an odd number of odd permutations since this ensures that there is no ordering of the associated permutations which multiplies to the identity.

\begin{defn}
We call the multiset consisting of all associated permutations in $G_{n,k}$ the \emph{permutation multiset} of $G_{n,k}$.
\end{defn}

\begin{defn}\label{d:parityfunction}
Let $\mathcal{O}$ be the multiset which contains all odd permutations of the permutation multiset of $G_{n,k}$.  Define the \emph{parity function} by
\begin{displaymath}
	Par(n,k) = \left\{
		\begin{array}{lr}
			0 & \text{\emph{if }} |\mathcal{O}| \equiv 0 \text{\emph{ mod 2}}\\
			1 & \text{\emph{if }} |\mathcal{O}| \equiv 1 \text{\emph{ mod 2}}
		\end{array}
	\right.
\end{displaymath}
\end{defn}

\begin{lm}\label{l:oddparity}
If $Par(n,k) = 1$, then there does not exist a ucycle of $P(n,k)$.
\end{lm}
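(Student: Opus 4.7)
The plan is to combine the characterization in Theorem~\ref{t:characterization} with the fact that the sign map $\mathrm{sgn}\colon S_k\to\{\pm 1\}$ is a homomorphism, so that the parity of a product of permutations depends only on the multiset of factors, not on their order.

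First I would recall that any ucycle of $\cp(n,k)$ arises by lifting some Eulerian cycle $E = e_1, e_2,\ldots, e_{S(n,k)}$ of $G_{n,k}$, and by Theorem~\ref{t:characterization} such a lift exists if and only if the permutation product $\pi_{S(n,k)}\pi_{S(n,k)-1}\cdots\pi_1$ equals the identity in $S_k$. Since an Eulerian cycle traverses every edge of $G_{n,k}$ exactly once, the list of associated permutations $\pi_1,\ldots,\pi_{S(n,k)}$ is, as a multiset, precisely the permutation multiset of $G_{n,k}$; only the order in which these factors are multiplied depends on the particular Eulerian cycle chosen.

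Next I would apply the sign homomorphism. The sign of a product of permutations is the product of their signs, independently of the order of multiplication. Therefore
\[
\mathrm{sgn}\bigl(\pi_{S(n,k)}\pi_{S(n,k)-1}\cdots\pi_1\bigr) = \prod_{i=1}^{S(n,k)} \mathrm{sgn}(\pi_i) = (-1)^{|\mathcal{O}|},
\]
where $\mathcal{O}$ is the multiset of odd permutations in the permutation multiset of $G_{n,k}$. Under the hypothesis $Par(n,k)=1$, the cardinality $|\mathcal{O}|$ is odd, so the permutation product of every Eulerian cycle $E$ is an odd permutation and in particular cannot be the identity.

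Finally, by Theorem~\ref{t:characterization} no Eulerian cycle of $G_{n,k}$ can be lifted to a ucycle of $\cp(n,k)$, and since every ucycle would correspond to such a lift, no ucycle of $\cp(n,k)$ exists. The argument is essentially direct once the characterization theorem is in hand; the only conceptual point to be careful about is that the permutation product depends on the ordering coming from the Eulerian cycle, so one must invoke the sign homomorphism (rather than attempting to control the product itself) to obtain an order-independent obstruction.
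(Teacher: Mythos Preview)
Your proof is correct and follows exactly the reasoning the paper indicates just before stating the lemma: since an Eulerian cycle uses each edge once, the multiset of associated permutations is fixed, and applying the sign homomorphism together with Theorem~\ref{t:characterization} shows the permutation product is odd and hence never the identity. The paper treats this lemma as immediate from that remark and gives no separate proof, so your write-up simply fills in the details of the same argument.
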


The following formula gives a recursive formula for calculating $Par(n,k)$.


\begin{lm}\label{l:parityrecursion}
The function $Par(n,k)$ satifies the following recurrence relation: 
\begin{equation}\label{e:parityrecursion}
Par(n,k) \equiv k \cdot Par(n-1,k) + Par(n-1,k-1) + S(n-2,k-2) \text{\emph{ mod 2}}
\end{equation}
with initial conditions $Par(n,2) = 0$ for all $n$, and
$$Par(n,n-1) = 
\begin{cases}
1 & \text{\emph{ if }} n \equiv 0 \text{\emph{ mod 4}}\\
0 & \text{\emph{ otherwise}}
\end{cases}$$
\end{lm}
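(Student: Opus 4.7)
The plan is to prove the recurrence by tracking how the cycle length of the associated permutation of a vertex of $G_{n,k}$ behaves under the natural map that deletes the element $n-1$ and yields a vertex of $G_{n-1,k}$ or $G_{n-1,k-1}$. Since every edge out of a common vertex carries the same associated permutation (Lemma~\ref{l:associatedperm}) and outdegrees are $k$ at $k$-partition vertices and $1$ at $(k-1)$-partition vertices, we first write $Par(n,k) \equiv k X_{n,k} + Y_{n,k} \pmod 2$, where $X_{n,k}$ (respectively $Y_{n,k}$) is the number of $k$-partitions (respectively $(k-1)$-partitions) of $[n-1]$ whose associated permutation has even cycle length $\ell$.

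Each vertex $v$ of $G_{n,k}$ arises from a unique vertex $u$ of $G_{n-1,k}$ or $G_{n-1,k-1}$ by placing element $n-1$ either in an existing block of $u$ or as a new singleton, giving four cases: (a) $v$ is a $k$-partition, $u$ is a $k$-partition of $[n-2]$, and $n-1$ enters one of $k$ existing blocks of $u$; (b) $v$ is a $k$-partition, $u$ is a $(k-1)$-partition of $[n-2]$, and $\{n-1\}$ is a new block; (c) $v$ is a $(k-1)$-partition, $u$ is a $(k-1)$-partition of $[n-2]$, and $n-1$ enters one of $k-1$ existing blocks; (d) $v$ is a $(k-1)$-partition, $u$ is a $(k-2)$-partition of $[n-2]$, and $\{n-1\}$ is a new block. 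The key observation is that $\ell(v)$ is determined by the prefix of $v$'s representation ending just before the second occurrence of $1$; since element $n-1$ occupies the last position of the representation, $\ell(v) = \ell(u)$ in all sub-cases except those where $1$ remains a singleton in both $u$ and $v$ but the ambient alphabet size changes. Here we use the fact that for any partition in which $1$ is a singleton, the cycle length of its associated permutation equals the ambient alphabet size, because the relative order appends the missing symbol at the end.

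The ``defect'' sub-cases therefore lie inside cases (c) and (d): in case (c), $1$ is singleton in $u$ and $n-1$ is added to a block of $u$ not containing $1$ ($k-2$ sub-choices per such $u$, and $u$ ranges over the $S(n-3,k-2)$ $(k-1)$-partitions of $[n-2]$ with $1$ singleton); in case (d), $1$ is singleton in $u$ (a single sub-choice per such $u$, with $u$ ranging over the $S(n-3,k-3)$ $(k-2)$-partitions of $[n-2]$ with $1$ singleton). In each defect $\ell$ jumps from $k-1$ (its value in $G_{n-1,k-1}$) to $k$ (its value in $G_{n,k}$), flipping parity. Aggregating contributions yields $X_{n,k} = k X_{n-1,k} + Y_{n-1,k}$ and
\[ Y_{n,k} \equiv (k-1) X_{n-1,k-1} + Y_{n-1,k-1} + k\,S(n-3,k-2) + S(n-3,k-3) \pmod 2. \]
Substituting into $Par(n,k) \equiv k X_{n,k} + Y_{n,k}$, using $k^2 \equiv k \pmod 2$, and applying the Stirling recurrence $S(n-2,k-2) = (k-2) S(n-3,k-2) + S(n-3,k-3)$ together with $k-2 \equiv k \pmod 2$ collapses the expression to the stated recurrence.

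For the initial condition $Par(n,2) = 0$, the only non-identity associated permutation in $G_{n,2}$ is $(1\,2)$, which arises from $2$-partition vertices of outdegree $2$, each contributing two copies to the multiset, while the unique $1$-partition vertex has the identity as its associated permutation; hence $|\mathcal{O}|$ is even (this also appears in the proof of Theorem~\ref{t:k=2}). For $Par(n,n-1)$, the graph has one $(n-1)$-partition vertex of outdegree $n-1$ (associated permutation the $(n-1)$-cycle) and $\binom{n-1}{2}$ $(n-2)$-partition vertices of outdegree $1$, whose associated cycle length is $j-1$ when the merged pair is $\{1,j\}$ and $n-1$ otherwise; summing parities mod $2$ yields the stated dependence on $n \pmod 4$. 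The main obstacle is the careful bookkeeping of the ambient-alphabet defects: the cycle length for $1$-singleton vertices depends intrinsically on the ambient graph and not on the combinatorial structure of the partition alone, and this is precisely what produces the Stirling-number correction term in the recurrence.
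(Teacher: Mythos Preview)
Your argument is correct and follows essentially the same mechanism as the paper's proof: both compare the associated permutation of a vertex $v$ of $G_{n,k}$ to that determined by the partition of $[n-2]$ obtained by deleting the last element, and both identify the parity-flipping ``defects'' as precisely the $(k-1)$-partition vertices in which $1$ is a singleton while the ambient alphabet size drops by one. The only differences are organizational: the paper realizes this comparison by viewing the vertex $v$ directly as an \emph{edge} $e_p$ of $G_{n-1,k}$ or $G_{n-1,k-1}$ (whose tail vertex is exactly your $u$), thereby obtaining the defect count $S(n-2,k-2)$ in one stroke, whereas you introduce the auxiliary counts $X_{n,k},Y_{n,k}$, split into four cases by the role of $n-1$, and then recombine via the Stirling recurrence $(k-2)S(n-3,k-2)+S(n-3,k-3)=S(n-2,k-2)$; and for the boundary value $Par(n,n-1)$ the paper simply cites~\cite{cs}, while you supply a self-contained computation.
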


\begin{proof}
We establish a relationship between the 
permutation multiset of $G_{n,k}$ and those of $G_{n-1,k}$ and $G_{n-1,k-1}$.  Suppose $v$ is a vertex in $G_{n,k}$, so $v$ represents a $k$ or $k-1$-partition $p$ of $[n-1]$.  We consider the edge $e_p$ which represents $p$ in either $G_{n-1,k}$ or $G_{n-1,k-1}$.  We know that the associated permutation of $e_p$ is determined by the location of the second occurence of the first symbol in a representation $r_p$ of the vertex $w_p$ at the tail of $e_p$ by Lemma ~\ref{l:associatedperm}. 
First, suppose $v$ represents a $k$-partition of $[n-1]$.  If the first symbol does actually occur for a second time in $r_p$, then since there is a representation of $v$ whose first $n-2$ characters are precisely $r_p$, it follows that $e_p$ has the same associated permutation as all the edges coming from $v$.  If the first symbol of $r_p$ does not occur a second time, then $e_p$ has associated permutation $(1 \; k \; k\!-\!1 \cdots 2)$.  If $w_p$ has outdegree 1, then the representations of $v$ do not have a second occurrence of their first symbols, and so all edges from $v$ have associated permutation $(1 \; k \; k\!-\!1 \cdots 2)$.  If $w_p$ has outdegree $k$, then the representations of $v$ have an occurence of all symbols before a second occurence of the first symbol, so we get that the edges form $v$ have associated permutation $(1 \; k \; k\!-\!1 \cdots 2)$ again.  Since each vertex of $G_{n,k}$ which represents a $k$-partition of $[n-1]$ has outdegree $k$, we get the term $k \cdot Par(n-1,k)$.

Now, suppose $v$ represents a $k-1$-partition of $[n-1]$.

\emph{Case 1, the first symbol of $v$ appears a second time.}  Then either the first symbol of $r_p$ appears a second time, or the first symbol of $r_p$ is appended by following $e_p$ (if the second appearance in $v$ is at the last character).  If the first symbol of $r_p$ appears a second time, then by previous reasoning $e_p$ has the same associated permutation as all edges from $v$.  If the first symbol of $r_p$ does not occur a second time, then $e_p$ and the edges from $v$ all have associated permutation $(1 \; k \; k\!-\!1 \cdots 2)$.

\emph{Case 2, the first symbol of $v$ does not appear a second time.}  Then the first symbol of $r_p$ does not appear a second time, and so $e_p$ must have associated permutation $(1 \; k\!-\!1 \; k\!-\!2 \; \cdots 2)$.  However, in this case $v$ has associated permutation $(1 \; k \; k\!-\!1 \cdots 2)$.  Note that since the first symbol of $v$ does not appear a second time, the last $n-2$ characters of $v$ represent a $k-2$-partition of $[n-2]$, so this case occurs exactly $S(n-2,k-2)$ times.  Thus, we have $S(n-2,k-2)$ partitions that either switch from even to odd or odd to even; in either case adding $S(n-2,k-2)$ affects the parity in the desired manner.

Thus, each vertex in $G_{n,k}$ which represents a $k-1$-partition of $[n-1]$ has the same associated permutation as it does in the graph $G_{n-1,k-1}$ except for $S(n-2,k-2)$ permutations which change sign.  Since each such vertex has outdegree 1, we get the $Par(n-1,k-1) + S(n-2,k-2)$ term.\\

Finally, the initial condition $Par(n,2) = 0$ for all $n$ follows from Theorem ~\ref{t:k=2}, and the initial condition
$$Par(n,n-1) = 
\begin{cases}
1 & \text{if } n \equiv 0 \text{ mod 4}\\
0 & \text{ otherwise}
\end{cases}$$
follows from the remark following Theorem 5 in ~\cite{cs} 
\end{proof}


\begin{cor}\label{c:parityformula}
For $n \geq 4$ and $2 \leq k < n$,
\begin{equation}\label{e:parityformula}
	Par(n,k) \equiv
	\begin{cases}
		0 \text{\emph{ mod 2}} & \text{\emph{if }} n \text{\emph{ is odd}}\\
		S(n-2,k-2) \text{\emph{ mod 2}} & \text{\emph{if }} n \text{\emph{ is even}}
	\end{cases}
\end{equation}
\end{cor}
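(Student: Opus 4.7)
The plan is to argue by induction on $n$, using the recurrence and initial conditions established in Lemma~\ref{l:parityrecursion}. The base case $n = 4$ forces $k \in \{2, 3\}$, both of which are covered by the initial conditions; one checks these match the claimed formula directly ($Par(4,2)=0$ against $S(2,0)=0$, and $Par(4,3)=1$ against $S(2,1)=1$).

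For the inductive step, I first dispose of the boundary values $k = 2$ and $k = n-1$ by appealing to the initial conditions. The case $k = 2$ is immediate, since $Par(n,2) = 0$ and $S(n-2,0) = 0$ for $n \ge 4$, so the formula holds regardless of the parity of $n$. The case $k = n-1$ reduces to comparing $S(n-2,n-3) = \binom{n-2}{2}$ modulo $2$ to the values of $Par(n,n-1)$ modulo $4$; a quick split on $n \bmod 4$ (together with $n$ odd) confirms the match in all subcases.

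For the interior range $3 \leq k \leq n-2$, I would apply the recurrence
$$Par(n,k) \equiv k \cdot Par(n-1,k) + Par(n-1,k-1) + S(n-2,k-2) \pmod 2$$
and split on the parity of $n$. When $n$ is even, $n-1$ is odd, and the inductive hypothesis immediately makes the first two terms vanish modulo $2$, leaving the stated $S(n-2,k-2)$ on the right-hand side.

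The one step that warrants care is the case where $n$ is odd. Here $n-1$ is even, so by induction $Par(n-1,k) \equiv S(n-3,k-2)$ and $Par(n-1,k-1) \equiv S(n-3,k-3) \pmod 2$, and I need
$$k \cdot S(n-3,k-2) + S(n-3,k-3) + S(n-2,k-2) \equiv 0 \pmod 2.$$
The key observation is that the classical Stirling recurrence $S(n-2,k-2) = (k-2)\,S(n-3,k-2) + S(n-3,k-3)$ exactly matches the shape of the parity recurrence: substituting it yields $(2k-2)\,S(n-3,k-2) + 2\,S(n-3,k-3)$, which is manifestly even. This cancellation is the crux of the argument, and is the only spot I would expect to double-check carefully; once the alignment between the two recurrences is noticed, the induction closes without further work.
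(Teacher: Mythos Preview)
Your proof is correct and follows essentially the same approach as the paper: induction via the recurrence of Lemma~\ref{l:parityrecursion}, with the odd-$n$ case handled by substituting the Stirling recurrence $S(n-2,k-2)=(k-2)S(n-3,k-2)+S(n-3,k-3)$ to make everything visibly even, and the even-$n$ case immediate from the vanishing of $Par(n-1,\cdot)$. Your treatment of the boundary values $k=2$ and $k=n-1$ is slightly more explicit than the paper's, but the substance is the same.
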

\begin{proof}
We proceed by induction.  For the base cases, we show that the initial conditions of the recursion in Corollary ~\ref{l:parityrecursion} satisfy Equation (~\ref{e:parityformula}).  If we define $S(n,0) := 0$ for all $n$, then Equation (~\ref{e:parityformula}) yields 
$Par(n,2) = 0$ for all $n$.  Also, if $n$ is even, then Equation ~\ref{e:parityformula} yields
\begin{eqnarray*}
	Par(n,n-1) = S(n-2, n-3) = \binom {n-2}{2} &=& \frac{(n-2)(n-3)}{2}\\ &\equiv&
	\begin{cases}
		0 \text{ mod 2} & \text{if } n \equiv 2 \text{ mod 4}\\
		1 \text{ mod 2} & \text{if } n \equiv 0 \text{ mod 4}
	\end{cases}
\end{eqnarray*}
as desired.

Now, suppose that (\ref{e:parityformula}) holds for $Par(n-1,k)$ and $Par(n-1,k-1)$.  If $n$ is odd then $n-1$ is even, so by (\ref{e:parityrecursion}), the induction hypothesis, and the fundamental Stirling number recurrence, 
\begin{align*}
	Par(n,k) & \equiv k \cdot Par(n-1,k) + Par(n-1,k-1) + S(n-2,k-2) & \text{ mod 2}\\
		 & \equiv k \cdot S(n-3,k-2) + S(n-3,k-3) + S(n-2,k-2) & \text{ mod 2}\\
		 & \equiv 2 S(n-3,k-2) + S(n-2,k-2) + S(n-2,k-2) & \text{ mod 2}\\
		 & \equiv 2 S(n-2,k-2) &\text{ mod 2}\\
		 & \equiv 0 & \text{ mod 2}
\end{align*}
If $n$ is even, then $n-1$ is odd and so $Par(n-1,k) = Par(n-1,k-1) = 0$.  Hence, $Par(n,k) \equiv S(n-2,k-2)$ mod 2 by (\ref{e:parityrecursion}).
\end{proof}

\begin{cor}\label{c:oddparitycondition}
If $n$ is even and $S(n-2,k-2)$ is odd, there does not exist a ucycle for $k$-partitions of $[n]$.
\end{cor}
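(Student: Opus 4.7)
The plan is to combine two results already in hand. By Corollary~\ref{c:parityformula}, when $n$ is even we have the explicit identity $Par(n,k) \equiv S(n-2,k-2) \pmod{2}$, so the hypothesis that $S(n-2,k-2)$ is odd translates directly into $Par(n,k) = 1$. Then Lemma~\ref{l:oddparity}, which records that an odd parity function precludes a ucycle, immediately yields the conclusion. So the proof is essentially a one-line chain of implications, with no new combinatorial input required.

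First I would invoke the formula from Corollary~\ref{c:parityformula}: under the assumption that $n$ is even, $Par(n,k)$ agrees with $S(n-2,k-2)$ modulo $2$. Next, since $S(n-2,k-2)$ is assumed odd, this forces $Par(n,k) = 1$, meaning the permutation multiset of $G_{n,k}$ contains an odd number of odd permutations. Finally, applying Lemma~\ref{l:oddparity}, no ordering of these associated permutations multiplies to the identity, so Theorem~\ref{t:characterization} prevents any Eulerian cycle of $G_{n,k}$ from lifting to a ucycle of $\cp(n,k)$.

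There is no real obstacle here — all of the analytic work has been done in the preceding recursion (Lemma~\ref{l:parityrecursion}) and its closed-form consequence (Corollary~\ref{c:parityformula}). The only thing worth emphasizing in the write-up is that ``no ucycle exists'' is a stronger statement than ``this particular Eulerian cycle fails to lift'': the parity obstruction is intrinsic to the multiset of associated permutations and therefore rules out \emph{every} Eulerian cycle simultaneously, which is exactly what Lemma~\ref{l:oddparity} guarantees. The corollary can then be illustrated by noting concrete pairs, such as $(n,k) = (6,3)$ and $(12,6)$, where $S(n-2,k-2)$ is odd, giving infinitely many $(n,k)$ for which ucycles are ruled out.
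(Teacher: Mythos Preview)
Your proposal is correct and follows the same route as the paper: use Corollary~\ref{c:parityformula} to conclude $Par(n,k)=1$ from the hypotheses, then invoke Lemma~\ref{l:oddparity} to rule out a ucycle. The paper's proof is even terser (it just says the result follows directly from Corollary~\ref{c:parityformula}), so your version is if anything more explicit about the role of Lemma~\ref{l:oddparity}.
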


\begin{proof}
This follows directly from Corollary ~\ref{c:parityformula}.
\end{proof}

\begin{cor}\label{c:346}
If $n \geq 4$ is even, then there does not exist a ucycle for $3, 4,$ or $6$-partitions of $n$.
\end{cor}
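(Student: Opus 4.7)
The plan is to invoke Corollary \ref{c:oddparitycondition} directly: it suffices to verify, for each $k \in \{3,4,6\}$ and each even $n$ with $n > k$, that the Stirling number $S(n-2,k-2)$ is odd. Two of the three cases are immediate from closed forms, while the third requires a short parity calculation via the usual recurrence.

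For $k=3$ I would use $S(n-2,1) = 1$, which is odd for every $n \geq 3$. For $k=4$ I would use the closed form $S(m,2) = 2^{m-1}-1$, which is odd whenever $m \geq 1$; applied at $m = n-2$ and $n \geq 6$ this gives oddness. These two cases settle $k=3$ and $k=4$ without any serious work.

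The case $k=6$ is the main (and really the only) obstacle, since I need to pin down the parity of $S(n-2,4)$ for even $n$. My strategy is to apply the fundamental Stirling recursion $S(n,k) = k\,S(n-1,k) + S(n-1,k-1)$ modulo $2$. Since $4$ is even, this collapses to $S(m,4) \equiv S(m-1,3) \pmod 2$. Applying the recursion again at $k=3$ and using $S(m,2) = 2^{m-1}-1 \equiv 1 \pmod 2$ for $m \geq 2$, I get $S(n,3) \equiv S(n-1,3) + 1 \pmod 2$, so the parity of $S(n,3)$ simply flips at each step. Starting from $S(3,3)=1$ and inducting, $S(n,3)$ is odd precisely when $n$ is odd. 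Substituting back, $S(m,4)$ is odd iff $m-1$ is odd iff $m$ is even. Taking $m = n-2$ with $n$ even, $S(n-2,4)$ is odd, as needed.

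Combining the three cases and quoting Corollary \ref{c:oddparitycondition} completes the argument. The only slightly delicate point is the chained parity induction for $S(n-2,4)$; everything else is bookkeeping. I would also briefly note the implicit assumption $n > k$ so that $\cp(n,k)$ is nontrivial (e.g., $n=4$ is only relevant for $k=3$, $n=6$ for $k \in \{3,4\}$, and $n \geq 8$ for $k=6$).
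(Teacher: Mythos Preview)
Your proof is correct and follows the same overall reduction as the paper: invoke Corollary~\ref{c:oddparitycondition} and then verify that $S(n-2,1)$, $S(n-2,2)$, and $S(n-2,4)$ are odd when $n$ is even. The first two cases are handled identically. For the parity of $S(m,4)$ with $m$ even, however, the paper takes a different route: it uses the explicit inclusion--exclusion formula
\[
S(m,4)=\tfrac{1}{6}\bigl(-1+3\cdot 2^{m-1}-3^{m}+4^{m-1}\bigr)
\]
and observes that the bracketed integer is $\equiv 2 \pmod 4$ (since $3^{m}\equiv 1\pmod 4$ for even $m$), so dividing by $6$ leaves an odd number. Your argument via the recursion---$S(m,4)\equiv S(m-1,3)\pmod 2$ and the parity-flip $S(j,3)\equiv S(j-1,3)+1\pmod 2$---is equally valid and arguably more elementary, since it avoids the closed form entirely; the paper's version trades the short induction for a single direct computation. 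One tiny quibble: $2^{m-1}-1$ is odd only for $m\ge 2$, not $m\ge 1$, but since you apply it at $m=n-2\ge 4$ this does not affect the argument.
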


\begin{proof}
By Corollary ~\ref{c:oddparitycondition}, we must prove that $S(n,1), S(n,2),$ and $S(n,4)$ are odd for all even $n$.  We know $S(n,1) = 1$ and $S(n,2) = 2^{n-1} - 1$, which are both odd.  Now, 
\begin{align*}
	S(n,4) & = \frac{1}{4!} \left[ - \binom {4}{1} 1^n + \binom {4}{2} 2^n - \binom {4}{3} 3^n + \binom {4}{4} 4^n \right] \\
		  & = \frac{1}{4!} (-4 + 6 \cdot 2^n - 4 \cdot 3^n + 4^n)\\
		  & = \frac{1}{6} (-1 + 3 \cdot 2^{n-1} - 3^n + 4^{n-1})
\end{align*}
If we consider the term in parentheses mod 4, then we get $(-1 + 0 -1 + 0) \equiv 2$ ($3^n \equiv 1$ mod 4 for even $n$).  Thus 2 only divides the term in parentheses once, so when we divide it by 6 we get an odd number.
\end{proof}

\section{Open Questions}   In addition to the question raised in the last line of the proof of Theorem 1, we can ask the following:

(a) The smallest case remaining after our investigation is the one for $n=5,k=3$, for which we have found the ucycle 3112311123213233112131322.  All other cases for $n = 5$ and $n = 6$ are solved.  This leads to the question:  What is the best result that can be proved along the following lines ``For $n \geq 3$ and $3 \leq k < n$, ucycles of $k$-partitions of $[n]$ exist if and only if $n$ is odd?"  

(b) In general, how can one use Theorem 10 to prove results on {\it existence} of ucycles (rather than non-existence)?

(c) Even if ucycles of $\cp(n,k)$ may not exist, when it it true that ucycles exist for $\cp(n,s,t)$, the set of partitions of $[n]$ into between $s$ and $t$ parts; $s<t$?  

(d) Throughout this paper, we have insisted on having the alphabet size equal $k$.  How do our results change if we relax this condition?

\section{Acknowledgments}  The research of all the authors was supported by NSF Grant 1263009.

\end{document}